\documentclass[noend,a4paper]{amsproc}

\usepackage{amsthm,amsmath,amsfonts,mathrsfs,amssymb}
\usepackage{algorithm}
\usepackage{algorithmicx, algpseudocode}
\usepackage[T1]{fontenc}   
\usepackage{multirow}
\usepackage{natbib}
\usepackage{breakurl}
\usepackage{hyperref}

\newtheorem{theorem}{Theorem}
\newtheorem{defn}[theorem]{Definition}
\newtheorem{prop}[theorem]{Proposition}
\newtheorem{lemma}[theorem]{Lemma}
\theoremstyle{definition}
\newtheorem{remark}[theorem]{Remark}


\newcommand{\algorithmicbreak}{\textbf{break}}
\newcommand{\Break}{\State \algorithmicbreak}

\newcommand{\Singular}{\textsc{Singular}}
\newcommand{\realclassify}{\texttt{realclassify.lib}}
\newcommand{\classify}{\texttt{classify.lib}}

\newcommand{\requiv}{\ensuremath{\mathrel{\overset{r}{\sim}}}}
\newcommand{\cequiv}{\ensuremath{\mathrel{\overset{c}{\sim}}}}
\newcommand{\sequiv}{\ensuremath{\mathrel{\overset{s}{\sim}}}}

\DeclareMathOperator{\ord}{ord}
\DeclareMathOperator{\m}{\mathfrak{m}}
\DeclareMathOperator{\jet}{jet}
\DeclareMathOperator{\corank}{corank}

\DeclareMathOperator{\diag}{diag}
\DeclareMathOperator{\NF}{NF}
\DeclareMathOperator{\N}{\mathbb{N}}
\DeclareMathOperator{\Q}{\mathbb{Q}}
\DeclareMathOperator{\R}{\mathbb{R}}
\DeclareMathOperator{\C}{\mathbb{C}}
\DeclareMathOperator{\K}{\mathbb{K}}
\DeclareMathOperator{\A}{\mathbb{A}}

\DeclareMathOperator{\boldzero}{\mathbf{0}}

\hyphenation{equiva-lence}
\hyphenation{sin-gu-lar-ities}

\title[The Classification of Real Singularities Using \textsc{Singular}, %
Part I]%
{The Classification of Real Singularities Using \textsc{Singular}\\
Part I: Splitting Lemma and Simple Singularities}

\author{Magdaleen S. Marais}
\address{Magdaleen S. Marais\\
African Institute for Mathematical Sciences and University of Pretoria\\
Department of Mathematics and Applied Mathematics\\
0002 Pretoria\\
South Africa}
\email{magdaleen@aims.ac.za}

\author{Andreas Steenpa\ss}
\address{Andreas Steenpa\ss\\
Department of Mathematics\\
University of Kaiserslautern\\
Erwin-Schr\"odinger-Str.\\
67663 Kaiserslautern\\
Germany}
\email{steenpass@mathematik.uni-kl.de}

\thanks{This research was supported by the African Institute for Mathematical
Sciences and a grant awarded by Gert-Martin Greuel. We are thankful to both of
them.}

\keywords{%
hypersurface singularities, algorithmic classification, real geometry%
}

\begin{document}

\begin{abstract}
We present algorithms to classify isolated hypersurface singularities over the
real numbers according to the classification by V.I.~Arnold \citep{AVG1985}.
This first part covers the splitting lemma and the simple singularities; a
second and a third part will be devoted to the unimodal singularities up to
corank~2. All algorithms are implemented in the \Singular{} library
\realclassify{} \citep{realclassify}.
\end{abstract}

\maketitle

\section{Introduction}
\citet{AVG1985} present classification theorems for singularities over the
complex numbers up to modality~2 and for singularities over the real numbers up
to modality~1, including complete sets of normal forms. For the complex case,
they also give an algorithm how the type of a given singularity can be
computed, called the ``determinator of singularities''
\citep[cf.\@][ch.~16]{AVG1985}, but this question is left open for the real
case. The goal of this paper, together with its subsequent parts, is to fill
this gap. For this purpose, we present both, algorithms and an implementation
thereof, for the classification of isolated hypersurface singularities up to
modality~1 and corank 2 over the real numbers w.r.t.\@ right equivalence.

We consider real functions with a critical point at the origin and critical
value~$0$, i.e.\@ functions in $\m^2$, where $\m$ denotes the ideal of function
germs vanishing at the origin. Two function germs $f, g \in \m^2 \subset
\R[[x_1,\ldots,x_n]]$ are considered as right equivalent, denoted by
$f \requiv g$, if there exists an $\R$-algebra automorphism $\phi$ of
$\R[[x_1,\ldots,x_n]]$ such that $\phi(f) = g$.

We have implemented all the algorithms presented here in the computer algebra
system \Singular{} \citep{DGPS}. The implementation is freely available as a
\Singular{} library called \realclassify{} which relies on \Singular's
\classify{} to determine, for a given polynomial, the type in Arnold's
classification over the complex numbers. The methods used in \classify{} will
not be discussed in this paper. For more information in this regard,
\citet{Kruger} can be studied.

In Section~\ref{sec:prerequisites}, we introduce basic notions and methods
which are frequently used for the algorithmic classification in the subsequent
sections. We first give an overview of the different notions of equivalence in
singularity theory and how they are related in
Subsection~\ref{subsec:equivalence}. Thereafter we recall some basic results on
the Milnor number and the determinacy in Subsections~\ref{subsec:milnor_number}
and~\ref{subsec:determinacy}, and we also recall how these invariants can be
computed. As a further prerequisite, we show that the homogeneous parts of
lowest degree of two right equivalent functions factorize in the same way over
$\R$ (Section~\ref{subsec:factorization}, Proposition~\ref{kjet}). We also show
that in some cases, this factorization can even be carried out over $\Q$ which
is important for the algorithmic aspect (Lemma~\ref{x^3}).

Using the Splitting Lemma (Theorem~\ref{thm:splitting_lemma}), any function
germ $f$ over the real numbers with an isolated singularity at the origin can
be written, after choosing a suitable coordinate system, as the sum of two
functions of which the variables are disjoint. One of the functions, called the
nondegenerate part of $f$, is a nondegenerate quadratic form and the other
function, called the residual part of $f$, is an element of $\m^3$. The number
of variables in the residual part is equal to the corank of $f$, denoted by
$\corank(f)$. In this paper we only consider germs with corank $0$, $1$
and~$2$. A version of the Splitting Lemma for singularities over $\R$ and a
corresponding algorithm are discussed in Section~\ref{sec:splitting_lemma}.

In \citet{AVG1985}, the real singularities of modality $0$ and $1$ are
classified up to stable equivalence into main types which split up into more
subtypes
depending on the sign of certain terms. Two functions are stably equivalent if
they are right equivalent after the direct addition of nondegenerate quadratic
forms. Hence after applying the Splitting Lemma, we only need to consider the
residual part in order to compute the correct subtype. It can be easily seen
that the subtypes are complex equivalent to a complex singularity type of the
same name as its corresponding real main singularity type (see
Table~\ref{tab:normal_forms}). In fact there is a bijection between the complex
types of modality $0$ and $1$ and the real main types. Thus, if we can
determine the complex type of a function germ, we only need to determine the
correct subtype of the corresponding real main type. The classification of the
residual part is given in Section~\ref{sec:residual_part}, together with
explicit algorithms for each singularity type.

\section{Prerequisites}%
\label{sec:prerequisites}

\subsection{Equivalence}%
\label{subsec:equivalence}

There are different notions for the equivalence of two power series in
singularity theory:

\begin{defn}%
\label{def:equivalence}
Let $\K$ be either $\R$ or $\C$ and let $f, g \in \K[[x_1, \ldots, x_n]]$ be
two power series.

\begin{enumerate}
\item
$f$ and $g$ are called \emph{right equivalent}, denoted by $f \requiv g$, if
there exists a $\K$-algebra automorphism $\phi$ of $\K[[x_1, \ldots, x_n]]$ such
that
\[
\phi(f) = g \,.
\]

\item
$f$ and $g$ are called \emph{contact equivalent}, denoted by $f \cequiv g$, if
there exist a $\K$-algebra automorphism $\phi$ of $\K[[x_1, \ldots, x_n]]$ and a
unit $u \in \K[[x_1, \ldots, x_n]]^*$ such that
\[
\phi(f) = u \cdot g \,.
\]

\item\label{enum:stable_equivalence}
$f$ and $g$ are called \emph{stably equivalent}, denoted by $f \sequiv g$, if
there exist indices $k, l \in \{1, \ldots ,n\}$ such that
$f \in \K[[x_1, \ldots, x_k]]$, $g \in \K[[x_1, \ldots, x_l]]$, and the two
power series become right equivalent after the addition of nondegenerate
quadratic forms in the additional variables, i.e.
\begin{alignat*}{4}
& f(x_1, \ldots, x_k) &{}\pm{}& x_{k+1}^2 &{}\pm{}& \ldots &{}\pm{}& x_n^2 \\
\requiv{} \,
& g(x_1, \ldots, x_l) &{}\pm{}& x_{l+1}^2 &{}\pm{}& \ldots &{}\pm{}& x_n^2 \,.
\end{alignat*}
\end{enumerate}
\end{defn}

\begin{remark}
Note that right equivalence implies both contact and stable equivalence, but
the converse statements are not true in general. For instance, $x_1^2+x_2^2$
and $-x_1^2-x_2^2$ are contact, but not right equivalent over $\R$.
\end{remark}

This article and the \Singular{} library \realclassify{} both deal with the
classification of the simple singularities w.r.t.\@ \emph{right} equivalence
over $\K = \R$. We first use the Splitting Lemma and
Algorithm~\ref{alg:splitting_lemma} from Section~\ref{sec:splitting_lemma} to
get rid of the nondegenerate part. We can then apply the classification by
\citet{AVG1985} w.r.t.\@ stable equivalence to the residual part in
Section~\ref{sec:residual_part}.

From the point of view of real algebraic geometry, a classification w.r.t.\@
contact rather than right equivalence might be more interesting because it
better reflects the local real geometry of a singularity. In the example from
the remark above, $x_1^2+x_2^2$ and $-x_1^2-x_2^2$ both define a solitary point
in the plane, as opposed to the two intersecting lines defined by
$-x_1^2+x_2^2$ and $x_1^2-x_2^2$. But note that a classification w.r.t.\@ right
equivalence is only finer than one based on contact equivalence. Hence the
shape of the local real geometry of a singularity can always be read off from
its right equivalence class, given by its stable equivalence class together
with the inertia index introduced in Theorem~\ref{thm:splitting_lemma}; the
\Singular{} library \realclassify{} indeed also serves this purpose. For the
simple singularities, it is moreover easy to see which of the right equivalence
classes are contact equivalent.

\subsection{The Milnor Number}%
\label{subsec:milnor_number}

We briefly recall the following well-known definition:

\begin{defn}
For $f \in \R[[x_1,\ldots,x_n]]$ and $p \in \A_{\R}^n$, the
\emph{Milnor number} of $f$ at $p$ is defined as
\[
\mu(f, p) := \dim_{\R}
\left( \R[[x_1-p_1, \ldots, x_n-p_n]] \bigg/
\left\langle \frac{\partial f}{\partial x_1}, \ldots,
\frac{\partial f}{\partial x_n} \right\rangle \right)
\in \N \cup \{\infty\} \,.
\]
If $p$ is the origin, we simply write $\mu(f)$ instead of $\mu(f, p)$.
\end{defn}

The Milnor number is known to be finite at isolated singularities
\citep[cf.\@][Chapter~I, Lemma~2.3]{GLS2007} and to be invariant under right
equivalence
(cf.\@ Lemma~2.10 ibid.). It is thus an important tool for the classification
of isolated singularities. We refer to \citet{GLS2007} for more properties of
this invariant.

There is a well-known algorithm for the computation of the Milnor number which
is implemented in \Singular{}, see \citet{GP2008}, pp. 526-528.

\subsection{The Determinacy}%
\label{subsec:determinacy}

In general, the singularities we deal with in this paper are defined by power
series, but algorithmically, we want to work with polynomials. It is thus
important for our algorithmic approach that any power series defining an
isolated singularity is right equivalent to a polynomial which can be obtained
from it by leaving out terms of sufficiently high order.

\begin{defn}
Let $f \in \R[[x_1,\ldots,x_n]]$ be a power series.

\begin{enumerate}
\item Let $f = \sum_{j=0}^\infty f_j$ be the decomposition of $f$ into
homogeneous parts $f_j$ of degree $j$.
For $k \in \N$, we define the \emph{$k$-jet} of $f$ as
\[
\jet(f,k) := \sum_{i=0}^k f_i \,.
\]
In other words, the $k$-jet of $f$ can be obtained from $f$ by leaving out all
terms of order higher than $k$.

\item $f$ is called \emph{$k$-determined} if
\[
\forall g \in \m^{k+1}: \quad f \requiv \jet(f,k)+g \,.
\]
\end{enumerate}
\end{defn}

The determinacy is, just as the Milnor number, both invariant under right
equivalence and finite for isolated singularities. We cite the following
statement \citep[cf.\@][Chapter~I, Supplement to Theorem~2.23]{GLS2007} due to
its importance for the algorithmic approach and refer to \citet{GLS2007} for
further results regarding the determinacy:

\begin{prop}\label{prop_determinacy}
Let $f \in \m \subset \R[[x_1,\ldots,x_n]]$. If
\[
\m^{k+1} \subset \m^2 \left\langle \frac{\partial f}{\partial x_1}, \ldots,
\frac{\partial f}{\partial x_n} \right\rangle_{\R[[x_1,\ldots,x_n]]}
\]
holds, then $f$ is $k$-determined.
\end{prop}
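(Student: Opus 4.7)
\medskip

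\textbf{Proof proposal.} The plan is to use the classical homotopy method of Mather/Tougeron. Given $g \in \m^{k+1}$, I will construct a one-parameter family of automorphisms $\phi_t$ of $\R[[x_1,\ldots,x_n]]$ with $\phi_0 = \mathrm{id}$ satisfying
\[
(f + tg) \circ \phi_t \;=\; f \qquad \text{for all } t \in [0,1],
\]
and then set $\phi := \phi_1$ to obtain $f \requiv f + g = \jet(f,k) + (g + (f - \jet(f,k)))$, where the additional term also lies in $\m^{k+1}$ and is absorbed the same way. (Equivalently, one applies the argument directly to an arbitrary element of $\m^{k+1}$ added to $\jet(f,k)$; the hypothesis is stable under this.)

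First I would write $F_t := f + tg$ and differentiate the desired relation $F_t \circ \phi_t = f$ in $t$. This yields the infinitesimal equation
\[
g(\phi_t(x)) \;+\; \sum_{i=1}^n \frac{\partial F_t}{\partial x_i}(\phi_t(x)) \cdot \dot\phi_t^{\,i}(x) \;=\; 0.
\]
Thus it suffices to find, for each $t$, vector field components $\xi_t^i \in \m^2$ with
\[
g \;=\; -\sum_{i=1}^n \xi_t^i \cdot \frac{\partial F_t}{\partial x_i},
\]
and then to integrate the vector field $\xi_t$ to a family of automorphisms. Because $\xi_t^i \in \m^2$, the resulting $\phi_t$ has linear part the identity and hence is a genuine $\R$-algebra automorphism of $\R[[x_1,\ldots,x_n]]$.

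The crux is solving this division problem, and the key step is upgrading the hypothesis from $f$ to $F_t$. Since $\partial_i g \in \m^k$, we have
\[
\m^2 \langle \partial_i f \rangle \;\subset\; \m^2 \langle \partial_i F_t \rangle \;+\; \m^2 \cdot \m^k \;=\; \m^2 \langle \partial_i F_t \rangle \;+\; \m^{k+2}.
\]
Combining with the hypothesis $\m^{k+1} \subset \m^2 \langle \partial_i f \rangle$ gives $\m^{k+1} \subset \m^2 \langle \partial_i F_t \rangle + \m \cdot \m^{k+1}$. Applying Nakayama's lemma to the finitely generated module $\m^{k+1} / \bigl(\m^{k+1} \cap \m^2 \langle \partial_i F_t \rangle\bigr)$ over the local ring $\R[[x_1,\ldots,x_n]]$, I conclude
\[
\m^{k+1} \;\subset\; \m^2 \langle \partial_i F_t \rangle \quad \text{for every } t.
\]
In particular $g \in \m^{k+1}$ lies in this ideal, so the required $\xi_t^i \in \m^2$ exist; standard bookkeeping lets us choose them depending polynomially on $t$.

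Finally, I integrate $\xi_t$ formally: one builds $\phi_t$ in the $\m$-adic topology by solving the ODE $\dot\phi_t = \xi_t \circ \phi_t$ order by order, using that $\xi_t \in \m^2$ to guarantee convergence in the $\m$-adic filtration and to ensure the linear part of $\phi_t$ stays the identity at every stage. Evaluating at $t=1$ gives the required automorphism. The main technical obstacle is precisely this last integration step in the formal category, together with the Nakayama argument needed to make the division problem solvable uniformly in $t$; everything else is bookkeeping.
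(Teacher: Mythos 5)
The paper does not prove this proposition at all: it is quoted from \citet[Chapter~I, Supplement to Theorem~2.23]{GLS2007}, so there is no in-paper argument to compare against. Your proposal is the standard Mather--Tougeron homotopy proof, which is essentially the argument in the cited source, and its core is sound: the reduction of $k$-determinacy to showing $f \requiv f+g$ for all $g \in \m^{k+1}$, the infinitesimal equation, and above all the Nakayama step upgrading $\m^{k+1} \subset \m^2\langle \partial_i f\rangle$ to $\m^{k+1} \subset \m^2\langle \partial_i F_t\rangle$ (using $\partial_i g \in \m^k$ and $\m^{k+2} = \m\cdot\m^{k+1}$) are all correct. The one place where you are gliding over a real issue is the claim that ``standard bookkeeping'' yields $\xi_t^i$ depending \emph{polynomially} on $t$: running the Nakayama argument over $\R[[x_1,\ldots,x_n,t]]$ (where $\m = \langle x_1,\ldots,x_n\rangle$ still lies in the Jacobson radical) gives $\xi^i$ that are formal, not polynomial, in $t$, and then integrating the resulting ODE from $t=0$ to $t=1$ is not legitimate. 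The standard repairs are either to solve the division problem locally near each $t_0 \in [0,1]$ and use compactness and connectedness of the interval, or to abandon the ODE altogether and build $\phi$ by successive approximation order by order in the $\m$-adic filtration; since you explicitly flag this integration as the remaining technical obstacle rather than asserting it is done, I would count the proposal as correct in outline with one fixable gap at exactly the point you identified.
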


As a consequence of this, any power series $f$ which has an isolated
singularity at the origin is $(\mu(f)+1)$-determined
\citep[cf.\@][Chapter~I, Corollary~2.24]{GLS2007}.
But we can often compute a much better upper bound for the
determinacy by using the above statement as in Algorithm~\ref{alg_Determinacy}.

\begin{algorithm}[ht]
\caption{\label{alg_Determinacy}Determinacy}
\begin{algorithmic}[1]

\Require{$f \in \Q[x_1,\ldots,x_n]$ with an isolated singularity at the origin}
\Ensure{an upper bound for the determinacy of $f$}

\State $k := \textsc{Milnor}(f)+1$
\State $J := \left( \frac{\partial f}{\partial x_1}, \ldots,
\frac{\partial f}{\partial x_n} \right) \subset \Q[x_1,\ldots,x_n]$
\State compute a standard basis $G$ of $(\m^2 J)$ w.r.t.\@ a local monomial
ordering $<$
\For{$(l = 1,\ldots,k-1)$}
\If{$(\NF_<(\m^{l+1},G) = 0)$}
\State $k := l$
\Break
\EndIf
\EndFor
\Return $k$

\end{algorithmic}
\end{algorithm}

\begin{remark}
In Algorithm~\ref{alg_Determinacy}, the for-loop computes the minimal
$k \in \N$ such that the condition in Proposition~\ref{prop_determinacy} holds.
This number is equal to the degree of the so-called highest corner
of $\langle G \rangle = (\m^2 J)$ \citep[cf.\@][Corollary~A.9.7]{GP2008}
and can thus also be computed by combinatorial means with the
\Singular{} command \verb+highcorner()+ which is often much faster.
\end{remark}

It is worth to note that the Milnor number of an arbitrary power series
$f \in \R[[x_1,\ldots,x_n]]$ and the determinacy of a semi-quasihomogeneous
power series $f \in \R[[x_1,\ldots,x_n]]$ do not change if we regard $f$ as an
element of $\C[[x_1,\ldots,x_n]]$. The same holds for the output of the
corresponding algorithms presented here.

\subsection{Results Regarding the Factorization of Homogeneous Polynomials
over $\R$ and $\Q$}%
\label{subsec:factorization}

\begin{defn}
Let $\phi$ be an $\R$-algebra automorphism of $\R[[x_1,\ldots,x_n]]$. For
$j \geq 0$ we define the \emph{$j$-jet} of $\phi$, denoted by $\phi_j$, to be
the automorphism given by
\[
\phi_j(x_i) := \jet(\phi(x_i),\, j+1) \quad \forall i = 1,\ldots,n \,.
\]
\end{defn}

The next result is in many cases a starting point for the algorithmic
classification of the residual part, see Section~\ref{sec:residual_part}. Given
$f$ and $g$ with $f \requiv g$, it can be used to determine $\phi_0$ for some
automorphism $\phi$ such that $\phi(f) = g$.

\begin{prop}\label{kjet}
Let $f,g \in \R[[x_1,\ldots,x_n]]$ be two power series with $f \requiv g$ and
$k := \ord(f) > 1$. Let $\phi$ be an $\R$-algebra automorphism of
$\R[[x_1,\ldots,x_n]]$ such that $\phi(f)=g$.

If $\jet(f,k)$ factorizes as
\[
\jet(f,k) = f_1^{s_1} \cdots f_t^{s_t}
\]
in $\R[x_1,\ldots,x_n]$, then $\jet(g,k)$ factorizes as
\[
\jet(g,k) = \phi_0(f_1)^{s_1} \cdots \phi_0(f_t)^{s_t} \,.
\]
\end{prop}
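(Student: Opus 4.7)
The plan is to extract the statement by comparing degree-$k$ homogeneous parts on both sides of the equation $\phi(f) = g$, and then use that $\phi_0$ is an $\R$-algebra homomorphism to distribute over the factorization.

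First, I would observe that since $\phi$ is an $\R$-algebra automorphism of $\R[[x_1,\ldots,x_n]]$, it must preserve the maximal ideal $\m$; in particular, $\phi(x_i) = \phi_0(x_i) + h_i$ with $h_i \in \m^2$, and the linear map $\phi_0$ is itself an $\R$-algebra automorphism of $\R[[x_1,\ldots,x_n]]$ (the linear part must be invertible). A direct consequence is that $\phi$ preserves order, so $\ord(g) = \ord(f) = k$. Hence both $\jet(f,k)$ and $\jet(g,k)$ coincide with the homogeneous degree-$k$ components $f_k$ and $g_k$ of $f$ and $g$ respectively, and by assumption $f_k = f_1^{s_1}\cdots f_t^{s_t}$.

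Next I would compute the degree-$k$ part of $\phi(f)$. Writing $f = f_k + f_{k+1} + \ldots$, each $\phi(f_j)$ lies in $\m^j$ because $\phi$ respects the maximal-ideal filtration, so the degree-$k$ component of $\phi(f)$ equals the degree-$k$ component of $\phi(f_k)$. For the homogeneous polynomial $f_k$, substituting $\phi(x_i) = \phi_0(x_i) + h_i$ and expanding gives $\phi(f_k) = f_k(\phi_0(x_1)+h_1,\ldots,\phi_0(x_n)+h_n)$; collecting terms of degree exactly $k$ yields precisely $f_k(\phi_0(x_1),\ldots,\phi_0(x_n)) = \phi_0(f_k)$, since every contribution involving some $h_i$ adds at least one extra degree. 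Comparing degree-$k$ components of $\phi(f) = g$ therefore gives $g_k = \phi_0(f_k)$.

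Finally, because $\phi_0$ is an $\R$-algebra homomorphism, it is multiplicative, so
\[
\jet(g,k) = g_k = \phi_0(f_k) = \phi_0(f_1^{s_1}\cdots f_t^{s_t}) = \phi_0(f_1)^{s_1}\cdots\phi_0(f_t)^{s_t},
\]
which is the claim. The only step that requires any care is the identification of the degree-$k$ part of $\phi(f_k)$ with $\phi_0(f_k)$; this is essentially a bookkeeping argument about substitutions modulo $\m^{k+1}$, and constitutes the one place where the definition of $\phi_0$ as the linear part of $\phi$ is really used.
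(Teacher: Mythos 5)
Your proof is correct and follows essentially the same route as the paper: both arguments isolate the degree-$k$ homogeneous component of $\phi(f)$, identify it with $\phi_0(f_k)$ because every contribution from the higher-order parts of $\phi$ or of $f$ has order greater than $k$, and then use that $\phi_0$ is multiplicative to distribute over the factorization. Your version spells out the bookkeeping (order preservation, $\phi(\m^j)\subseteq\m^j$) a bit more explicitly than the paper's decomposition $\phi=\phi_0+\phi^*$, but the content is identical.
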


\begin{proof}
By assumption we have that $f = f_1^{s_1} \cdots f_t^{s_t} + f'$, where
$f_1^{s_1} \cdots f_t^{s_t}$ is homogeneous of degree $k$ and the order of $f'$
is greater than $k$. We denote the higher order parts of $\phi$ by
$\phi^* := \phi-\phi_0$. Since $\phi$ is a homomorphism, it follows that
\begin{align*}
\phi(f) &= \phi(f_1^{s_1} \cdots f_t^{s_t}) + \phi(f') \\
&= \phi_0(f_1^{s_1} \cdots f_t^{s_t})
+ \phi^*(f_1^{s_1} \cdots f_t^{s_t}) + \phi(f')
\end{align*}
where $\phi_0(f_1^{s_1} \cdots f_t^{s_t})$ is homogeneous of degree $k$ and
both $\phi^*(f_1^{s_1} \cdots f_t^{s_t})$ and $\phi(f')$ are of order higher
than $k$. Hence
\[
\jet(g, k) = \jet(\phi(f), k) = \phi_0(f_1^{s_1} \cdots f_t^{s_t})
= \phi_0(f_1)^{s_1} \cdots \phi_0(f_t)^{s_t} \,.
\]
\end{proof}

Since we do not want to work with rounding errors nor field extensions in the
implementation of the proposed algorithms, the above result would not be of
much help for this purpose without the following result.

\begin{lemma}\label{x^3}
If $f \in \Q[x,y]$ is homogeneous and factorizes as
\[
\text{(i) } g_1^d \text{ or (ii) } g_1 g_2^d \,,
\]
where $g_1, g_2 \in \R[x,y]$ are polynomials of degree $1$ and $d > 1$, then
$f$ factorizes as
\[
\text{(i) } ag_1'^d \text{ or (ii) } ag_1' g_2'^d \,,
\]
respectively, where $g_1', g_2' \in \Q[x,y]$ are polynomials of degree $1$ and
$a \in \Q$.
\end{lemma}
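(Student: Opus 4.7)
The plan is to use unique factorization of $f$ over $\Q$ and reconcile it with the given real factorization to force every rational irreducible factor of $f$ to be linear. Write
\[
f = a \cdot p_1^{e_1} \cdots p_m^{e_m}
\]
with $a \in \Q^*$ and the $p_i \in \Q[x,y]$ pairwise distinct homogeneous irreducibles. The whole statement follows once I show that every $p_i$ has degree~$1$; matching the resulting rational linear factors against the hypothesized $\R$-factorization of $f$ then immediately yields $g_1'$ and $g_2'$.

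To prove $\deg p_i = 1$, I would use that $\Q$ has characteristic~$0$: each irreducible $p_i$ is separable, hence splits over $\C$ into $\deg p_i$ pairwise distinct linear factors. Moreover, distinct $p_i, p_j$ are coprime in $\Q[x,y]$ and therefore also in $\C[x,y]$ (otherwise the Galois orbit of a common complex factor would multiply to a rational common divisor, contradicting irreducibility). Consequently, every complex linear factor of $f$ belongs to a unique $p_i$ and occurs in $f$ with multiplicity exactly $e_i$. By hypothesis, the distinct complex linear factors of $f$ are $g_1$ and $g_2$ (or just one of them, up to scalar, if $g_1$ and $g_2$ are proportional), with multiplicity multiset $\{1, d\}$ in case~(ii) with $g_1 \not\parallel g_2$, and $\{d\}$ or $\{d+1\}$ otherwise. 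Since $d > 1$, no two distinct $\C$-linear factors of $f$ share a common multiplicity, so no $p_i$ can contribute two or more distinct $\C$-linear factors; hence $\deg p_i = 1$.

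Reading off the factors is now immediate. In case~(i), $m = 1$ with $e_1 = d$, so $f = a p_1^d$ and $g_1' := p_1$ works. In case~(ii) with $g_1 \not\parallel g_2$, $m = 2$ with $\{e_1, e_2\} = \{1, d\}$; we take $g_1'$ and $g_2'$ to be the $p_i$ with $e_i = 1$ and $e_i = d$ respectively. The sub-case $g_1 \parallel g_2$ of~(ii), which is not excluded by the statement, is handled by setting $g_1' = g_2' = p_1$ with $m = 1$ and $e_1 = d+1$. The main obstacle is really only the multiplicity-matching step in the middle paragraph, combining separability of the $p_i$ with the preservation of coprimality under base change to $\C$; the more elementary alternative of expanding $(\alpha x + \beta y)^d$ and comparing coefficient ratios handles case~(i) cleanly but becomes cumbersome in case~(ii), which is why I prefer the unique-factorization route.
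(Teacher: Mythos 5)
Your proof is correct, but it takes a genuinely different route from the paper's. The paper argues directly and explicitly: it normalizes so that a suitable leading coefficient is nonzero, observes that the product of the leading coefficients of the real factors is a coefficient of $f$ and hence rational, divides it out, dehomogenizes, and then invokes the perfect-field property of $\Q$ (the repeated-root part of a rational univariate polynomial is again rational) to conclude that the roots, and hence the linear factors, are rational; this is constructive but requires a small case analysis over which coefficients of $g_1, g_2$ vanish. You instead factor $f$ into irreducibles over $\Q$ and show all of them are linear by a multiplicity count: each rational irreducible is separable (characteristic $0$), so it splits into \emph{distinct} complex linear factors all carrying the same multiplicity $e_i$ in $f$, while the hypothesis forces the distinct complex linear factors of $f$ to have pairwise distinct multiplicities (using $d>1$) or to be unique up to scalar. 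This is more structural: it avoids coordinate normalizations, works verbatim over any perfect field in place of $\Q$, and cleanly covers the degenerate sub-case $g_1 \parallel g_2$ of (ii), which the paper's proof does not address explicitly. The only steps you compress are standard: that the irreducible factors of a homogeneous polynomial are homogeneous, and that coprimality in $\Q[x,y]$ persists in $\C[x,y]$ (your Galois-orbit justification is fine). The paper's version has the minor advantage of being immediately implementable as stated, but your argument is equally valid.
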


\begin{proof}

(i) Let $f = (a_1x+a_2y)^d$, $a_1, a_2 \in \R$. Without loss of generality,
suppose $a_1 \neq 0$. Then $f = a_1^d(x+\frac{a_2}{a_1}y)^d$. Since the
coefficient of $x^d$ in $f \in \Q[x,y]$ is $a_1^d$, we have $a_1^d \in \Q$ and
therefore $(x+\frac{a_2}{a_1}y)^d \in \Q[x,y]$ which, by dehomogenization,
leads to $(x+\frac{a_2}{a_1})^d \in \Q[x]$. Since $\Q$ is a perfect field it
follows that $\frac{a_2}{a_1} \in \Q$. Thus $f = ag_1'^d$, where
$a := a_1^d \in \Q$ and $g_1' = x+\frac{a_2}{a_1}y \in \Q[x,y]$.

(ii) Let $f = (a_1x+a_2y)(a_3x+a_4y)^d$, $a_1,\ldots,a_4 \in \R$. Suppose
$a_1,a_3 \neq 0$. For the cases $a_1,a_4 \neq 0$, $a_2,a_3 \neq 0$ and
$a_2,a_4 \neq 0$ the proofs are similar. We have $a_1a_3^d \in \Q$ analogously
to part (i). Hence $(x+\frac{a_2}{a_1}y)(x+\frac{a_4}{a_3}y)^d \in \Q[x,y]$
which in turn implies $(x+\frac{a_2}{a_1})(x+\frac{a_4}{a_3})^d \in \Q[x]$.
Since $\Q$ is a perfect field it follows that the roots of this polynomial are
rational. Therefore $f = ag_1'g_2'^d$ with $a: = a_1a_3^d \in \Q$,
$g_1' := (x+\frac{a_2}{a_1}y) \in \Q[x,y]$, and
$g_2' := (x+\frac{a_4}{a_3}y) \in \Q[x,y]$.
\end{proof}

\section{The Splitting Lemma}%
\label{sec:splitting_lemma}

\begin{defn}
For $f \in \R[[x_1,\ldots,x_n]]$, we define the corank of $f$, denoted by
$\corank(f)$, as the corank of the Hessian matrix $H(f)$ at $\boldzero$, i.e.
\[
\corank(f) := \corank(H(f)(\boldzero)) \,.
\]
\end{defn}

The following well-known theorem, called the Splitting Lemma, allows us to
reduce the classification to germs of full corank or, algorithmically, to
a polynomial contained in $\m^3 \cap \R[x_1,\ldots,x_c]$ for a given input
polynomial of corank $c$. We present a version for singularities over the real
numbers, taking into account the signs of the squares.

\begin{theorem}\label{thm:splitting_lemma}
If $f \in \m^2 \subset \R[[x_1,\ldots,x_n]]$ has an isolated singularity and if
its corank is $c$, then
\[
f \requiv g -\sum_{i=c+1}^{c+\lambda} x_i^2 +\sum_{i=c+\lambda+1}^n x_i^2
\]
with $g \in \m^3 \cap \R[[x_1,\ldots,x_c]]$. $g$ is called the residual part of
$f$ and $\lambda$ is called the inertia index of~$f$. Both $\lambda$ and the
right equivalence class of $g$ are uniquely determined by $f$.
\end{theorem}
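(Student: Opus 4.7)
The plan is to first reduce the quadratic part to a canonical diagonal form, then iteratively peel off one nondegenerate square at a time, and finally address uniqueness. I would begin by applying a real linear change of coordinates that diagonalizes the Hessian $H(f)(\boldzero)$ according to Sylvester's law of inertia; this brings the $2$-jet of $f$ to the form $-\sum_{i=c+1}^{c+\lambda} x_i^2 + \sum_{i=c+\lambda+1}^{n} x_i^2$, where $\lambda$ is the number of negative eigenvalues of $H(f)(\boldzero)$. Since the quadratic part of $\phi(f)$ for any $\R$-algebra automorphism $\phi$ is obtained from that of $f$ by the congruence $H(f)(\boldzero) \mapsto J^{T} H(f)(\boldzero) J$, with $J$ the Jacobian of $\phi$ at the origin, Sylvester's law immediately shows that $\lambda$ is an invariant of the right equivalence class of $f$.

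For existence of the splitting I would induct on $n - c$. The base case $n = c$ is immediate since a zero Hessian forces $f \in \m^3$. For the inductive step, after the Sylvester reduction one may assume $\partial^2 f/\partial x_n^2 (\boldzero) = 2\epsilon$ with $\epsilon \in \{\pm 1\}$. The power series $\partial f/\partial x_n$ then has leading term $2\epsilon x_n$, so by the formal implicit function theorem there is a unique $\psi \in \m^2 \cap \R[[x_1, \ldots, x_{n-1}]]$ with
\[
\frac{\partial f}{\partial x_n}(x_1, \ldots, x_{n-1}, \psi) = 0.
\]
The substitution $x_n \mapsto x_n + \psi$ is an $\R$-algebra automorphism because its Jacobian at the origin is the identity (here using $\psi \in \m^2$). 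In the new coordinates the linear Taylor coefficient of $f$ in $x_n$ about $x_n = 0$ vanishes, so $f = h(x_1, \ldots, x_{n-1}) + x_n^2 \cdot u(x_1, \ldots, x_n)$ with $u$ a unit of value $\epsilon$ at the origin. Writing $u = \epsilon v^2$ for the unique unit $v \in \R[[x_1,\ldots,x_n]]^*$ with $v(\boldzero) = 1$, the further coordinate change $x_n \mapsto x_n v$ brings $f$ to $h + \epsilon x_n^2$. Since $h$ lies in $\m^2 \cap \R[[x_1, \ldots, x_{n-1}]]$ and still has an isolated singularity of corank $c$, the induction hypothesis yields the desired decomposition.

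Uniqueness of the right equivalence class of $g$ is the main obstacle. Given another decomposition $f \requiv g' + Q$ with $Q = -\sum_{i=c+1}^{c+\lambda} x_i^2 + \sum_{i=c+\lambda+1}^{n} x_i^2$ and $g' \in \m^3 \cap \R[[x_1, \ldots, x_c]]$, the task reduces to proving that $g + Q \requiv g' + Q$ implies $g \requiv g'$. My plan is to run the existence construction on both $g + Q$ and $g' + Q$, exploiting the uniqueness of $\psi$ in the formal implicit function theorem at each inductive step; this produces "block-diagonal" coordinate changes stripping off the $Q$ part, whose composition with any given equivalence between $g + Q$ and $g' + Q$ must then restrict to an $\R$-algebra automorphism of $\R[[x_1, \ldots, x_c]]$ carrying $g$ to $g'$. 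Conceptually, the formal stable manifold cut out by $\partial f/\partial x_{c+1} = \cdots = \partial f/\partial x_n = 0$ is canonically associated to $f$, and the restriction of $f$ to this manifold recovers the residual part up to right equivalence in $c$ variables; making this identification fully rigorous is precisely the step I expect to require the most care.
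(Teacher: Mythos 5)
Your existence argument is correct but follows a genuinely different route from the paper. The paper performs the Sylvester reduction and then completes the square in all of the variables $x_{c+1},\ldots,x_n$ simultaneously, order by order: at each stage the cross terms $\sum_{i>c} x_i h_i^{(l)}$ are absorbed by the substitution $x_i \mapsto x_i \pm \tfrac12 h_i^{(l)}$, which pushes the error into $\m^{l}$ for increasing $l$, and the process is cut off by finite determinacy. You instead induct on $n-c$, peeling off one nondegenerate direction at a time via the formal implicit function theorem (solve $\partial f/\partial x_n = 0$ for $x_n = \psi$, shift, extract $\epsilon x_n^2$ by a square root of the unit $u$). Both are standard; the paper's version has the advantage of translating directly into Algorithm~2 (it manipulates only finitely many polynomial coefficients and stops at the determinacy bound), whereas your version is cleaner as a single induction but requires solving a formal equation to infinite order, and the substitution you write as ``$x_n \mapsto x_n v$'' should really be phrased as the inverse of the coordinate change $\tilde x_n = x_n v$, since $v$ depends on $x_n$. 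A point in your favour: you actually prove the invariance of $\lambda$ (via congruence of Hessians under the Jacobian of $\phi$ at the origin), which the paper's proof does not address.

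The one genuine gap is the uniqueness of the right equivalence class of $g$, which you acknowledge is only a plan. In particular, the ``formal stable manifold'' $\{\partial f/\partial x_{c+1} = \cdots = \partial f/\partial x_n = 0\}$ is \emph{not} canonically attached to $f$: under an automorphism $\phi$ the partial derivatives mix by the chain rule, so the vanishing locus of that particular subset of partials is not preserved, and an equivalence between $g+Q$ and $g'+Q$ need not restrict to one between the two loci without further normalization of $\phi$. Making this work requires the more delicate argument that reduces a general equivalence of $g+Q$ and $g'+Q$ to one preserving the block structure. Note, however, that the paper does not prove this either: it states uniqueness in the theorem and defers the proof to Theorems~2.46 and~2.47 in Chapter~I of \citet{GLS2007}, so on this point your proposal is no less complete than the paper's own proof, merely not yet rigorous.
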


The following proof is based upon the proofs of Theorems 2.46 and 2.47 in
Chapter~I of \citet{GLS2007}.

\begin{proof}
The corank of the Hessian matrix of $f$ at $0$ is $c$, so by the theory of
quadratic forms over $\R$ there is a transformation matrix $T$ such that
\[
T^t \cdot {\textstyle\frac{1}{2}} H(f)(\boldzero) \cdot T
= \diag(0,\ldots,0,-1,\ldots,-1,1,\ldots,1) \,.
\]
Therefore the linear coordinate change
$(x_1,\ldots,x_n) \mapsto (x_1,\ldots,x_n) \cdot T^t$ transforms the 2-jet of
$f$ into
$\left(-\sum_{i=c+1}^{c+\lambda} x_i^2 +\sum_{i=c+\lambda+1}^n x_i^2\right)$
where $\lambda$ is the inertia index of $f$.
Applied to $f$, this transformation leads to
\begin{align*}
f^{(3)} (x_1,\ldots,x_n)
  :\!&= f((x_1,\ldots,x_n) \cdot T^t) \\
  &= g_3
  -\sum_{i=c+1}^{c+\lambda} x_i^2 +\sum_{i=c+\lambda+1}^n x_i^2
  +\sum_{i=c+1}^n x_i\cdot h_i^{(3)}
\end{align*}
with $g_3 \in \m^3 \cap \R[[x_1,\ldots,x_c]]$ and $h_i^{(3)} \in \m^2$. The
coordinate change $\phi^{(3)}$ defined by
\[
\phi^{(3)}(x_i) :=
\begin{cases}
x_i,                      &i = 1, \ldots, c, \\
x_i+\frac{1}{2}h_i^{(3)}, &i = c+1, \ldots, c+\lambda, \\
x_i-\frac{1}{2}h_i^{(3)}, &i = c+\lambda+1, \ldots, n,
\end{cases}
\]
yields
\begin{align*}
f^{(4)} (x_1,\ldots,x_n)
  :\!&= f^{(3)}(\phi^{(3)}(x_1,\ldots,x_n)) \\
  &= g_3 +g_4
  -\sum_{i=c+1}^{c+\lambda} x_i^2 +\sum_{i=c+\lambda+1}^n x_i^2
  +\sum_{i=c+1}^n x_i\cdot h_i^{(4)}
\end{align*}
with $g_4 \in \m^4 \cap \R[[x_1,\ldots,x_c]]$ and $h_i^{(4)} \in \m^3$.
Continuing in the same manner, the last sum will be of arbitrarily high order.
It can be eventually left out because $f$ is finitely determined as an isolated
singularity.
\end{proof}

Since this proof is constructive, we can immediately derive Algorithm
\ref{alg:splitting_lemma} from it.

\begin{algorithm}[ht]
\caption{Algorithm for the Splitting Lemma}%
\label{alg:splitting_lemma}
\begin{algorithmic}[1]

\Require{$f \in \m^2 \subset \Q[x_1,\ldots,x_n]$ and $k \in \N$ such that $f$
is $k$-determined}

\Ensure{the corank $c$ of $f$, the inertia index $\lambda$ of $f$ and
$g \in \m^3 \cap \Q[x_1,\ldots,x_c]$ such that
\[
f \requiv g -\sum_{i=c+1}^{c+\lambda} x_i^2 +\sum_{i=c+\lambda+1}^n x_i^2
\]
}

\State compute a transformation matrix $T \in \R^{n \times n}$ such that
\[
T^t \cdot {\textstyle\frac{1}{2}} H(f)(\boldzero) \cdot T
= \diag(0,\ldots,0,-1,\ldots,-1,1,\ldots,1)
=: N
\]
\State $c :=$ number of zeroes on the diagonal of $N$
\State $\lambda :=$ number of entries equal to $-1$ on the diagonal of $N$
\State $f^{(3)} (x_1,\ldots,x_n) := f((x_1,\ldots,x_n) \cdot T^t)$
\For{$(l = 3, \ldots, k)$}
\State write $f^{(l)}$ as
\[
f^{(l)} = \sum_{j=3}^l g_j
  -\sum_{i=c+1}^{c+\lambda} x_i^2 +\sum_{i=c+\lambda+1}^n x_i^2
  +\sum_{i=c+1}^n x_i\cdot h_i^{(l)}
\]
\hspace{\algorithmicindent}with $g_j \in \m^j \cap \Q[x_1,\ldots,x_c]$ and
$h_i^{(l)} \in \m^{l-1}$
\State $f^{(l+1)} := \phi^{(l)}(f^{(l)})$ where $\phi^{(l)}$ is defined by
\[
\phi^{(l)}(x_i) :=
\begin{cases}
x_i,                      &i = 1, \ldots, c, \\
x_i+\frac{1}{2}h_i^{(l)}, &i = c+1, \ldots, c+\lambda, \\
x_i-\frac{1}{2}h_i^{(l)}, &i = c+\lambda+1, \ldots, n.
\end{cases}
\]
\EndFor

\State $g := \sum_{j=3}^k g_j$
\Return $c, \lambda, g$

\end{algorithmic}
\end{algorithm}

\section{The Real Classification of the Residual Part
\texorpdfstring{w.r.t.\@}{w.r.t.} Stable Equivalence}%
\label{sec:residual_part}

\citet{AVG1985} present independent classifications of the simple singularities
over the complex and over the real numbers, using stable equivalence. We refer
to the equivalence classes of the complex classification as \emph{complex
types}. In the classification over the real numbers, the simple singularities
are divided into \emph{main types} which split up into one or more
\emph{subtypes}. These subtypes differ from each other only in the sign of
certain terms.

It is known that the modality does not decrease under complexification
\citep[pp.~273-274]{AVG1985}. So by applying the algorithms for the complex
classification to the real normal forms, it is easy to see that in
modality~$0$, there is a one-to-one correspondence between the complex types
and the real main types. The real classification can thus be seen as a
refinement of the complex one. As we will see in the subsequent parts of this
series of articles, the same holds true also in modality~$1$, but in both
cases, this is not clear a priori and can only be deduced from the
independently derived complex and real classifications. In fact, it is not
known whether the modality is preserved under complexification in general
\citep[pp.~273-274]{AVG1985}.

Both the real and complex normal forms of the simple singularities are listed
in Table~\ref{tab:normal_forms}. From here onwards we will work with stable
equivalence, cf.\@
Definition~\ref{def:equivalence}(\ref{enum:stable_equivalence}). For all
degenerate forms it is thus only necessary, after applying the Splitting Lemma,
to consider their residual parts, i.e.\@ germs in $\m^3$. Note that the right
equivalence class of a real singularity is given by its stable equivalence
class together with its inertia index which can be computed using
Algorithm~\ref{alg:splitting_lemma}.

\begin{table}[!htb]
\centering
\caption{Real normal forms of singularities of modality $0$.}
\label{tab:normal_forms}
\begin{tabular}{|c|c|c|c|c|}
\hline
& Complex & Normal forms & \multirow{2}{*}{Equivalences} &
\multirow{2}{*}{Values of $k$} \\
& normal form & of real subtypes & & \\
\hline\hline
\multirow{2}{*}{$A_k$} & \multirow{2}{*}{$x^{k+1}$} & $+x^{k+1}$ $(A_k^+)$ &
$A_k^+ \requiv A_k^-$ & \multirow{2}{*}{$k \geq 1$} \\ \cline{3-3}
& & $-x^{k+1}$ $(A_k^-)$ & for~even $k$ & \\
\hline
\multirow{2}{*}{$D_k$} & \multirow{2}{*}{$x^2y+y^{k-1}$} &
$x^2y+y^{k-1}$ $(D_k^+)$ & \multirow{2}{*}{-} &
\multirow{2}{*}{$k \geq 4$} \\ \cline{3-3}
& & $x^2y-y^{k-1}$ $(D_k^-)$ & & \\
\hline
\multirow{2}{*}{$E_6$} & \multirow{2}{*}{$x^3+y^4$} & $x^3+y^4$ $(E_6^+)$ &
\multirow{2}{*}{-} & \multirow{2}{*}{-} \\ \cline{3-3}
& & $x^3-y^4$ $(E_6^-)$ & & \\
\hline
$E_7$ & $x^3+xy^3$ & $x^3+xy^3$ & - & - \\
\hline
$E_8$ & $x^3+y^5$ & $x^3+y^5$ & - & - \\
\hline
\end{tabular}
\end{table}

Using the \textsc{Singular} library \classify{} \citep{classify} for the
complex classification and the one-to-one correspondence between the real main
singularity types and the complex types, the algorithmic classification of a
real germ boils down to determining to which of the corresponding subtypes the
germ is equivalent. For the singularity types $E_7$ and $E_8$, there is nothing
left to do because each of these types has only one real subtype. The rest of
the cases is considered one by one in the following subsections.

Throughout the rest of this article we write $f$ for the given input
polynomial, $g$ for its residual part which can be obtained by applying the
Splitting Lemma, and $c$ for the corank of $f$. We also assume that $f$, and
thus $g$, is a polynomial over $\mathbb Q$. With these notations, $g$ is a
polynomial in $c$ variables.

\subsection{$\boldsymbol{A_1}$}

If $c = 0$, then $f$ is of complex type $A_1$. The residual part in this case
is $g = 0$, even though Table~\ref{tab:normal_forms} assigns the normal form
$x^2$ to this type for formal reasons. As a consequence, all the real
singularities of main type $A_1$ are stably equivalent and their right
equivalence class is completely determined by their inertia index $\lambda$.

\subsection{$\boldsymbol{A_k, k > 1}$}

If $c = 1$, then the singularity is of complex type $A_k$ for some $k > 1$.
Over the real numbers, this type splits up into the subtypes $A_k^+$ and
$A_k^-$ if $k$ is odd. Furthermore $g$ is a univariate polynomial in this case,
say $g\in\mathbb Q[x]$. The value of $k$ is given by the order of $g$ minus
$1$ because $\pm x^{k+1}$ and $g$ are right equivalent and thus have the same
order.

Note that if $k$ is even, then $A_k^+ \requiv A_k^-$ and we have only one real
subtype which we denote by $A_k$. Let $k$ be odd. Then the sign of the
singularity type is determined by the sign of the coefficient of $x^{k+1}$.
This follows since Proposition~\ref{kjet} implies
$\jet(g, k+1) = \pm(\phi_0(x))^{k+1} = \pm(\alpha x)^{k+1}$, where
$\phi(\pm x^{k+1}) = g$, $\alpha \in \R$, and the sign depends on the
singularity type. Since $k+1$ is even and $\alpha \in \mathbb R$, $\phi$ does
not change the sign of the coefficient of $x^{k+1}$. We use
Algorithm~\ref{alg:A_k}, after applying the Splitting Lemma in case $c = 0$ or
$c = 1$.

\begin{algorithm}[ht]
\caption{Algorithm for the case $A_k$}%
\label{alg:A_k}
\begin{algorithmic}[1]

\Require{$f \in \Q[x_1,\ldots,x_n]$ of complex singularity type $A_k$, the
output polynomial $g$ after applying Algorithm~\ref{alg:splitting_lemma}, and
the corank $c$ of $f$}

\Ensure{the real singularity type of $f$, i.e.\@ $A_k$, $A_k^+$ or $A_k^-$,
$k\in\mathbb N$}

\If{$c=0$}
\State type $:=A_1$
\EndIf
\If{$c=1$}
\State $k:= \ord(g)-1$
\If{$k$ is even}
\State type $:=A_k$
\Else
\State $s:=$ coefficient of $x^{k+1}$ in $g$
\If{$s > 0$}
\State type $:=A_k^+$
\Else
\State type $:=A_k^-$
\EndIf
\EndIf
\EndIf
\Return{type}

\end{algorithmic}
\end{algorithm}

\subsection*{}

For the rest of the paper we turn our attention to singularities of corank $2$.
In these cases $0\neq g\in\m^3$ is a polynomial in two variables, say
$g\in\mathbb Q[x,y]$. Using the \textsc{Singular} library {\tt classify.lib},
we determine the complex singularity type and thus the real main singularity
type of $g$, or equivalently $f$. The purpose of the remaining algorithms in
the paper is to determine the correct real subtype of $g$, or equivalently $f$.
We now consider each complex type, or equivalently every real main type,
separately.

\subsection{$\boldsymbol{D_4}$}

The normal form of the complex singularity type $D_4$ is $x^2y+y^3$, which
splits up into $x^2y+y^3$ ($D_4^+$) and $x^2y-y^3$ ($D_4^-$) in the real case.
The two cases can be distinguished by factorization; the details are carried
out in Algorithm~\ref{alg:D_4}. Since the determinacy of $D_4$ is $3$, it
suffices to look at the $3$-jet. The number of factors of the 3-jet over $\R$
is an invariant of the real subtype which is 1 in the case $D_4^+$ and 3 for
$D_4^-$.

However, using the \Singular{} command \verb+factorize+ in order to determine
the number of factors is problematic because the factorization over $\R$
differs from those over $\Q$ and $\C$ in some cases. As an alternative, we
dehomogenize the 3-jet and count the number of real roots of the resulting
univariate polynomial which is exactly the same as the number of factors of the
3-jet over $\R$.

If we want to dehomogenize the 3-jet via $x\mapsto x$, $y\mapsto 1$ without
reducing its degree, we first have to make sure that the coefficient of $x^3$
is non-zero. It is easy to check that this is achieved by lines 2 to 13 of
Algorithm~\ref{alg:D_4}. For the implementation in \Singular{}, we used the
library \texttt{rootsur.lib} \citep{roots} to count the number of real roots
of a univariate polynomial.

\begin{algorithm}[ht]
\caption{\label{alg:D_4}\label{D[4]} Algorithm for the case $D_4$}
\begin{algorithmic}[1]

\Require{$g\in \m^3\subset\mathbb Q[x,y]$ of complex singularity type $D_4$}
\Ensure{the real singularity type of $g$, i.e.\@ $D_4^+$ or $D_4^-$}
\State $h := \jet(g,3)$
\State $s_1:=$ coefficient of ${x^3}$ in $h$
\State $s_2 :=$ coefficient of ${y^3}$ in $h$
\If{$(s_1 = 0)$}
\If{$(s_2 \neq 0)$}
\State swap the variables $x$ and $y$ in $h$
\Else
\State $t_1:=$ coefficient of ${x^2y}$ in $h$
\State $t_2:=$ coefficient of ${xy^2}$ in $h$
\If{$(t_1+t_2 \neq 0)$}
\State apply $x\mapsto x$, $y\mapsto x+y$ to $h$
\Else
\State apply $x\mapsto x$, $y\mapsto 2x+y$ to $h$
\EndIf
\EndIf
\EndIf
\State apply $x\mapsto x$, $y\mapsto 1$ to $h$
\State $n :=$ number of real roots of $h$
\If{$(n<3)$}
\Return $D_4^+$
\Else
\Return $D_4^-$
\EndIf

\end{algorithmic}
\end{algorithm}

\begin{remark}
Geometrically, the dehomogenization in Algorithm~\ref{alg:D_4} corresponds to
blowing the 3-jet up at the origin plus choosing a chart. Since the 3-jet is
homogeneous, blowing-up always yields three lines in the complex case. In the
real case, however, we get either one or three lines depending on their
position w.r.t.\@ the real subspace in the complex picture. All the lines lie
in the chosen chart because the coefficient of $x^3$ is non-zero.
\end{remark}

\subsection{$\boldsymbol{D_k, k > 4}$}

For the cases $D_k$ with $k > 4$, the complex normal form is $x^2y+y^{k-1}$. It
splits up into $x^2y+y^{k-1}$ ($D_k^+$) and $x^2y-y^{k-1}$ ($D_k^-$) for each
$k$ over the reals. We use the following two results from
\citet[p.~35]{Siersma} to distinguish between the two cases:

\begin{lemma}\label{kDeterminacyD[k]k>4}
A singularity of type $D_k^+$ or $D_k^-$ is $(k-1)$-determined.
\end{lemma}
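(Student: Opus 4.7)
The plan is to apply Proposition~\ref{prop_determinacy} directly to the normal form $f = x^2y \pm y^{k-1} \in \R[[x,y]]$. Since determinacy is invariant under right equivalence, it is enough to show that $f$ itself is $(k-1)$-determined, and Proposition~\ref{prop_determinacy} reduces this to verifying the inclusion
\[
\m^k \subset \m^2 J, \qquad J := \left\langle \tfrac{\partial f}{\partial x},\; \tfrac{\partial f}{\partial y}\right\rangle = \left\langle 2xy,\; x^2 \pm (k-1)y^{k-2}\right\rangle.
\]
Because $\m^2 J$ is an ideal, it suffices to check that every monomial $x^a y^b$ with $a + b = k$ lies in $\m^2 J$.

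I would first handle the mixed monomials $x^a y^b$ with $a, b \ge 1$: factor out $2xy$, writing $x^a y^b = \tfrac{1}{2}\,x^{a-1}y^{b-1}\cdot(2xy)$. The prefactor has degree $a+b-2$, which is at least $2$ as soon as $a + b \ge 4$; hence for $k \ge 5$ every mixed degree-$k$ monomial lies in $\m^2 \cdot (2xy) \subset \m^2 J$. The two pure monomials $x^k$ and $y^k$ are then reached via the second generator of $J$:
\[
x^{k-2}\bigl(x^2 \pm (k-1)y^{k-2}\bigr) = x^k \pm (k-1)\,x^{k-2}y^{k-2}, \qquad y^2\bigl(x^2 \pm (k-1)y^{k-2}\bigr) = x^2 y^2 \pm (k-1)\,y^k.
\]
The left-hand sides lie in $\m^2 J$ because $x^{k-2},\, y^2 \in \m^2$, and the mixed terms $x^{k-2}y^{k-2}$ and $x^2 y^2$ on the right-hand sides already lie in $\m^2 J$ by the preceding argument; subtracting yields $x^k,\, y^k \in \m^2 J$.

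This completes the verification of $\m^k \subset \m^2 J$, so Proposition~\ref{prop_determinacy} gives the desired $(k-1)$-determinacy. There is no conceptual obstacle here; the argument is essentially a monomial-bookkeeping exercise, and the hypothesis $k > 4$ enters exactly once, to guarantee that in the mixed-monomial step the prefactor $x^{a-1}y^{b-1}$ has degree at least~$2$.
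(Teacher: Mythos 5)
Your argument is correct, and it is worth noting that the paper does not prove this lemma at all: it is quoted from Siersma's dissertation (p.~35) as a known fact. What you supply is therefore a self-contained alternative, namely a direct application of the paper's own finite-determinacy criterion (Proposition~\ref{prop_determinacy}) to the normal form $x^2y \pm y^{k-1}$, combined with the invariance of determinacy under right equivalence, which the paper states explicitly. The monomial bookkeeping checks out: every mixed monomial of degree $k$ factors as an element of $\m^{k-2}\cdot(2xy)$ with $k-2\ge 3$, and your two auxiliary identities recover $x^k$ and $y^k$ since the correction terms $x^{k-2}y^{k-2}$ and $x^2y^2$ are themselves of the form $\m^{\ge 2}\cdot(2xy)$ (their prefactors $x^{k-3}y^{k-3}$ and $xy$ have degrees $2k-6\ge 4$ and $2$ respectively, so the same factoring trick applies even though their total degrees are not $k$ --- you might state this explicitly rather than appealing to ``the preceding argument,'' which literally covered only degree-$k$ monomials). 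The one stylistic caveat is that a singularity ``of type $D_k^\pm$'' is defined in the paper via stable equivalence; since the lemma is only ever applied to the corank-$2$ residual part $g\in\m^3\subset\Q[x,y]$, your reduction to the two-variable normal form under right equivalence is exactly what is needed. Your approach buys a verifiable, elementary proof in place of an external citation, at the cost of being tied to the specific normal form.
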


\begin{lemma}\label{transformationD[k]}
Let $j \geq 4$. Then there exists a polynomial
$R \in \m^{j+1} \subset \R[[x, y]]$ such that
\[
x^2y + a_0 x^j + a_1 x^{j-1}y + \ldots + a_j y^j \requiv x^2y + a_j y^j + R,
\quad a_0, \ldots, a_j \in \R,
\]
using the $\R$-algebra automorphism
\begin{align*}
x &\mapsto x + p_1, \text{ where }
p_1 = -\frac{1}{2} (a_1 x^{j-2} + \ldots + a_{j-1} y^{j-2}) \,, \\
y &\mapsto y + p_2, \text{ where }
p_2 = -a_0 x^{j-2} \,.
\end{align*}
\end{lemma}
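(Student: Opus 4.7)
The strategy is a direct verification: apply the given substitution and track orders carefully. Write $f = x^2 y + h$ where $h = \sum_{i=0}^{j} a_i x^{j-i} y^i$ is homogeneous of degree $j$, and let $\phi$ denote the $\R$-algebra automorphism $x \mapsto x+p_1$, $y \mapsto y+p_2$. (The fact that $\phi$ is indeed an automorphism is clear because its linear part is the identity, as $p_1, p_2 \in \m^{j-2} \subset \m^2$.) The goal is to show $\phi(f) \equiv x^2y + a_j y^j \pmod{\m^{j+1}}$.

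First I would expand
\[
\phi(x^2 y) = (x+p_1)^2 (y+p_2) = x^2 y + x^2 p_2 + 2xy\, p_1 + 2x p_1 p_2 + p_1^2 y + p_1^2 p_2.
\]
Using $p_1, p_2 \in \m^{j-2}$ and the hypothesis $j \geq 4$, the last three summands all lie in $\m^{2j-3} \subset \m^{j+1}$. Hence modulo $\m^{j+1}$, only $x^2y + x^2 p_2 + 2xy\, p_1$ survives; by the definition of $p_2$ and $p_1$, these extra terms equal $-a_0 x^j$ and $-\sum_{i=1}^{j-1} a_i x^{j-i} y^i$, respectively.

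Next, for $\phi(h)$, I would argue that each term of $h(x+p_1, y+p_2) - h(x,y)$ carries at least one factor of $p_1$ or $p_2$, and therefore has order at least $(j-1)+(j-2) = 2j-3 \geq j+1$. Thus $\phi(h) \equiv h \pmod{\m^{j+1}}$. Adding the two contributions yields
\[
\phi(f) \equiv x^2 y + h - a_0 x^j - \sum_{i=1}^{j-1} a_i x^{j-i} y^i \equiv x^2 y + a_j y^j \pmod{\m^{j+1}},
\]
which is the claimed identity with $R := \phi(f) - x^2 y - a_j y^j \in \m^{j+1}$.

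There is no serious obstacle; the only thing to be careful about is the bookkeeping to confirm (a) that every cross term $p_1^a p_2^b$ arising from the expansion lies in $\m^{j+1}$ (which uses the inequality $j \geq 4$ in a sharp way for the term $p_1^2 y$), and (b) that the choices of $p_1$ and $p_2$ are exactly tailored so that $x^2 p_2 + 2xy\, p_1$ cancels every homogeneous term of $h$ except $a_j y^j$. Both amount to matching coefficients in one monomial expansion, so the proof reduces to a short computation once the order estimates are in place.
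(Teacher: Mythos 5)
Your proof is correct. Note that the paper itself gives no proof of this lemma --- it is quoted from Siersma's dissertation --- so there is nothing to compare against; your direct verification (expanding $\phi(x^2y)$, checking that all cross terms such as $2xp_1p_2$ and $p_1^2y$ have degree $2j-3\geq j+1$, and observing that $x^2p_2+2xyp_1$ cancels exactly the terms $a_0x^j,\ldots,a_{j-1}xy^{j-1}$ while $\phi(h)\equiv h \pmod{\m^{j+1}}$) is the natural argument and fills in the omitted details cleanly.
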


By Lemma \ref{kDeterminacyD[k]k>4}, the determinacy of a singularity of main
type $D_k$ is $k-1$. Therefore we only need to consider the $(k-1)$-jet of $g$
in this case. By Proposition~\ref{kjet}, the $3$-jet of $g$ factorizes as
$\jet(g, 3) = g_1^2g_2$ over $\R$, where $g_1$ and $g_2$ are homogeneous
polynomials of degree $1$. Note that Lemma~\ref{x^3} ensures that this
factorization can be carried out even over $\Q$. We can thus transform $g$ into
a polynomial of the form
\[
x^2y + \text{terms of degree higher than $3$}
\]
by applying the automorphism defined by $g_1\mapsto x$, $g_2\mapsto y$ to $g$.

We now systematically consider the terms of each degree $3<j<k$. By applying
the transformations in Lemma~\ref{transformationD[k]}, for each $j$, the only
term of total degree $j$ which possibly remains is $a_jy^j$. This term vanishes
for $j<k-1$ and it does not vanish for $j=k-1$, otherwise $g$ is not of complex
type $D_k$. Thus, after applying these transformations, we can write $g$ as
$g=x^2y+\alpha y^{k-1}$ with $\alpha \neq 0$. Clearly if $\alpha>0$ then
$x^2y+\alpha y^{k-1}\requiv x^2y+y^{k-1}$ and if $\alpha<0$ then
$x^2y+\alpha y^{k-1}\requiv x^2y-y^{k-1}$.

\begin{algorithm}[ht]
\caption{\label{alg:D_k} Algorithm for the case $D_k$, $k > 4$}
\begin{algorithmic}[1]

\Require{$g \in \m^3\subset\mathbb Q[x,y]$ of complex singularity type $D_k$,
$k\in\mathbb N$, $k>4$}

\Ensure{the real singularity type of $g$, i.e.\@ $D_k^+$ or $D_k^-$}

\State $k:= \mu(g)$
\State $h:=\jet(g,k-1)$
\State factorize $\jet(h,3)$ as $h_1^2h_2$, where $h_1$ and $h_2$ are linear
\State apply $h_1\mapsto x$, $h_2\mapsto y$ to $h$
\For{$(j = 4, \ldots, k-1)$}
\If{$(\jet(h,j)-x^2y\neq0)$}
\State write $\jet(h,j)-x^2y$ as
$a_0x^j+a_1x^{j-1}y+\cdots +a_jy^j,\quad a_0,\ldots a_j\in\mathbb Q$
\State apply $x\mapsto x-\frac{1}{2}(a_1x^{j-2}+\cdots
+a_{j-1}y^{j-2})$, $y\mapsto y-a_0x^{j-2}$ to $h$
\State $h:=\jet(h,k-1)$
\EndIf
\EndFor
\State write $h$ as $h=x^2y+\alpha y^{k-1}$, $0\neq\alpha\in\mathbb Q$
\If{$(\alpha>0)$}
\Return $D_k^+$
\Else
\Return $D_k^-$
\EndIf

\end{algorithmic}
\end{algorithm}

\subsection{$\boldsymbol{E_6}$}

In this case, whose complex normal form is $x^3+y^4$, we have that either
$g \requiv x^3+y^4$ ($E_6^+$) or $g \requiv x^3-y^4$ ($E_6^-$). Therefore there
exists an $\R$-algebra automorphism $\phi$ of $\R[[x,y]]$ such that
$g = (\phi(x))^3+(\phi(y))^4$ or such that $g = (\phi(x))^3-(\phi(y))^4$. Since
the coefficients of $x^3$ and $y^3$ in $g$ cannot both be zero, we can ensure
that the coefficient of $x^3$ is non-zero by swapping the variables if
necessary. Now, by Proposition~\ref{kjet} and Lemma~\ref{x^3}, the $3$-jet of
$g$ factorizes as $c(g_1)^3$ with $c \in \Q$ and $g_1 = b_0x+b_1y \in \Q[x,y]$,
$b_0 \neq 0$. By applying $x \mapsto \frac{x-b_1y}{b_0}$, $y \mapsto y$ to $g$,
we can thus assume without loss of generality that $\phi_0$ is of the form
$\phi_0(x) = c'x$, $\phi_0(y) = d_0 x + d_1 y$ with $c', d_0, d_1 \in \R$.
Since $\phi$ is an automorphism, we have that $d_1 \neq 0$. Hence
\[
(\phi(y))^4 = d_1^4 y^4
+ (\text{terms of degree 4 and higher, not of the form $\alpha y^4$,
$\alpha \in \R$}) \,.
\]
If we can show that $(\phi(x))^3$ does not contain a term of the form
$\alpha y^4$, $\alpha \in \R$, then we can determine whether $g$ is of type
$E_6^-$ or $E_6^+$ by considering the sign of the coefficient of the monomial
$y^4$. A simple calculation yields
\begin{align*}
\jet((\phi(x))^3, 4) - \jet((\phi(x))^3, 3)
&= 3 (\phi_0(x)^2) (\phi_1(x)-\phi_0(x)) \\
&= 3 (c'x)^2 (\phi_1(x)-\phi_0(x)) \,,
\end{align*}
which means that $(\phi(x))^3$ does not have any term of the form $\alpha y^4$,
$\alpha \in \R$.

\begin{algorithm}[ht]
\caption{\label{alg:E_6}\label{E[6]} Algorithm for the case $E_6$}
\begin{algorithmic}[1]

\Require{$g\in \m^3\subset\mathbb Q[x,y]$ of complex singularity type $E_6$}
\Ensure{the real singularity type of $g$, i.e.\@ $E_6^+$ or $E_6^-$}
\State $h:= \jet(g,3)$
\State $s:=$ coefficient of ${x^3}$ in $h$
\If{$(s=0)$}
\State swap the variables $x$ and $y$
\EndIf
\State factorize $h$ into linear factors over $\mathbb Q[x,y]$, with a factor
$g_1=b_0x+b_1y$
\State apply $x\mapsto \frac{x-b_1y}{b_0}$, $y\mapsto y$ to $g$
\State $d :=$ coefficient of $y^4$ in $g$
\If{$(d>0)$}
\Return $E_6^+$
\Else
\Return $E_6^-$
\EndIf

\end{algorithmic}
\end{algorithm}

\section{Acknowledgements}

We would like to thank Gerhard Pfister and Gert-Martin Greuel for insightful
conversations. We also would like to thank the two anonymous referees whose
suggestions greatly improved this article.

\newpage

\end{document}